\documentclass[conference]{IEEEtran}
\IEEEoverridecommandlockouts
\usepackage{cite}
\usepackage{amsmath,amssymb,amsfonts,amsthm,mathtools}
\usepackage{algorithmic}
\usepackage{graphicx}
\usepackage{textcomp}
\usepackage{xcolor}
\usepackage{tikz}
\def\BibTeX{{\rm B\kern-.05em{\sc i\kern-.025em b}\kern-.08em
    T\kern-.1667em\lower.7ex\hbox{E}\kern-.125emX}}

\newcommand{\drm}{\mathrm{d}}
\newcommand{\euler}{\mathrm{e}}

\newcommand{\id}{\mathrm{Id}}
\newcommand{\ii}{\mathrm{i}}
\newcommand{\ran}{\mathrm{ran}}
\newcommand{\supp}{\mathrm{supp}}

\newcommand{\obs}{\mathrm{obs}}

\newcommand{\NN}{\mathbb{N}}
\newcommand{\RR}{\mathbb{R}}
\newcommand{\CC}{\mathbb{C}}

\newcommand{\cS}{\mathcal{S}}
\newcommand{\cL}{\mathcal{L}}
\newcommand{\cN}{\mathcal{N}}

\newcommand{\1}{\mathbf{1}}

\DeclareMathOperator{\re}{\mathrm{Re}}

\newtheorem{theorem}{Theorem}[section]
\newtheorem{proposition}[theorem]{Proposition}
\newtheorem{lemma}[theorem]{Lemma}
\newtheorem{corollary}[theorem]{Corollary}
\theoremstyle{definition}

\newtheorem{definition}[theorem]{Definition}
\theoremstyle{remark}

\mathtoolsset{showonlyrefs}

\begin{document}

\title{Stabilizability of neural fields from thick subsets \\
\thanks{CB was funded by the Deutsche Forschungsgemeinschaft (DFG, German Research Foundation) – Project number 283135041 awarded to MR. \\ \copyright 2026 IEEE. Personal use of this material is permitted. Permission from IEEE must be obtained for all other uses, in any current or future media, including reprinting/republishing this material for advertising or promotional purposes, creating new collective works, for resale or redistribution to servers or lists, or reuse of any copyrighted component of this work in other works.}
}

\author{\IEEEauthorblockN{Clemens Bombach}
\IEEEauthorblockA{\textit{Predictive Analytics} \\
\textit{TU Chemnitz}\\
Chemnitz, Germany \\
clemens.bombach@hsw.tu-chemnitz.de}
\and
\IEEEauthorblockN{Marco Ragni}
\IEEEauthorblockA{\textit{Predictive Analytics} \\
\textit{TU Chemnitz}\\
Chemnitz, Germany \\
marco.ragni@hsw.tu-chemnitz.de}
}

\maketitle

\begin{abstract}
An important problem in neuro-engineering is the stabilization of neural fields. In applications, it is often assumed that the actuator placement can be chosen arbitrarily. In this work, we investigate the stabilizability of controlled Amari-type neural fields where the control input is prescribed to act only on a fixed subset of the neural field. We show that the linearized neural field is open-loop stabilizable under suitable assumptions on the interaction strength and a mild relative density assumption on the control set. Our geometric assumption requires that the volume of each cube intersected with the control set must be bounded below. As a consequence, we derive closed-loop stabilizability of the neural fields, under sensor/actuator placement constraints. Numerical simulations are used to illustrate the results and obtain empirical estimates on the control cost.
\end{abstract}

\begin{IEEEkeywords}
neural fields, stabilization, control systems
\end{IEEEkeywords}

\section{Introduction}
The Amari-type neural field equation, introduced in \cite{amari1977dynamics}, is a semilinear integro-differential equation that describes the time evolution of activity of a population of neurons. Denote by $d \in \NN$ the spatial dimension\footnote{In applications, we have that $d \in \{1,2,3\}$.} of the neural field and by $p \in (1,\infty)$ an integrability index.\footnote{We refer to Section \ref{sec:prelim} for an overview on mathematical notation.}  In this work, we consider a \emph{controlled} Amari-type field equation given by
\begin{align}\label{eq:amari}
	(\partial_t  + \alpha)a(t;x) - \mu \int\limits_{\RR^d} k(x-y)f(a(t;y))\,\drm y &= \1_E u(t;x) \notag \\ a(0;x) &= a_0(x)
\end{align}
where $a \in L^\infty((0,\infty);L^p(\RR^d))$ denotes the difference in neural activity compared to the base activity $0$, the function $a_0 \in L^p(\RR^d)$ is an initial activity and $u \in L^\infty((0,\infty);L^p(\RR^d))$ an input acting as a control function that may only affect $a$ via the control set $E$. The transfer function $f \in C^1(\RR)$ and the kernel $k \in \cS(\RR^d)$ describe how the neural response spreads after activation. Typical choices for $f$ and $k$ include a sigmoid shape $\tanh$ function and a Mexican hat wavelet, respectively. The parameters $\alpha,\mu > 0$ measure the strength of the activity decay and the neural interaction. As a semilinear equation, it remains mathematically tractable while being biologically grounded. For an overview on the mathematical and physiological theory of neural fields we refer to \cite{cook2022neural, coombes2014neural}.

The study of control and stability properties of neural field models such as \eqref{eq:amari} is motivated by applications in neuroscience and neuro-engineering, such as the prediction of visual illusions \cite{tamekue2024mathematical, bolelli2025neural} and the therapy of epilepsy \cite{taylor2014computational} or Parkinson's disease \cite{chaillet2017robust}. The exact controllability of \eqref{eq:amari} has been proven in the works \cite{tamekue2024mathematical, tamekue2025control} whereas the stabilizability of a version of \eqref{eq:amari} with delay and multiple populations of neurons has been proven in \cite{brivadis2024adaptive}.

The above works assume that $E = \RR^d$, i.e., that we may choose the support of the control function $u$ without restriction. In applications, it may happen that there are uncontrollable or unobservable regions of the underlying domain on which the control cannot act directly. Practical reasons for such a limitation may include physical obstructions regarding the placement of sensors or actuators. This restricts the way in which the control function $u$ may act. For studies on these issues for structural brain networks, including a discussion of the effect of the control set geometry, we refer to \cite{gu2015controllability, karrer2020practical}. We note that structural brain networks, unlike Amari-type neural fields, represent linear, space- and time-discrete models of human brain activity. Therefore, it is natural to consider limitations on the control set geometry in the continuum case.

In this work, we therefore investigate the stabilization properties of \eqref{eq:amari} under the weaker assumption that the control set $E$ is a thick subset of $\RR^d$. Intuitively, this means that the set $E$ may not contain any gaps of large $d$-dimensional volume -- see Definition \ref{def:thick} for a precise definition. Typical examples of thick sets include equidistributed sets of balls or discs. An example is pictured in Figure \ref{fig:thick}.  We will show that the system \eqref{eq:amari} is closed-loop stabilizable in $L^2$ by a suitable feedback law provided the coupling constant $\mu$ is not too large and $f^\prime (\theta)- f^\prime(0)$ is small for all $\theta \in \RR$.

\begin{figure}\label{fig:thick} \centering
	\begin{tikzpicture}
		\draw[thin,gray] (0,0) grid (4,4);
		\filldraw[color=violet](0.5,0.5) circle (0.25);
		\filldraw[color=violet](1.4,0.3) circle (0.25);
		\filldraw[color=violet](2.4,0.4) circle (0.25);
		\filldraw[color=violet](3.4,0.6) circle (0.25);
		\filldraw[color=violet](0.4,1.6) circle (0.25);
		\filldraw[color=violet](1.3,1.7) circle (0.25);
		\filldraw[color=violet](2.5,1.5) circle (0.25);
		\filldraw[color=violet](3.3,1.7) circle (0.25);
		\filldraw[color=violet](0.3,2.6) circle (0.25);
		\filldraw[color=violet](1.3,2.65) circle (0.25);
		\filldraw[color=violet](2.7,2.4) circle (0.25);
		\filldraw[color=violet](3.4,2.6) circle (0.25);
		\filldraw[color=violet](0.4,3.3) circle (0.25);
		\filldraw[color=violet](1.5,3.4) circle (0.25);
		\filldraw[color=violet](2.7,3.3) circle (0.25);
		\filldraw[color=violet](3.5,3.5) circle (0.25);
		\draw[black, very thick] (0.4,0.4) rectangle (2.4,2.4);
	\end{tikzpicture}
	\caption{An example of a thick set, drawn in purple. No matter where the black rectangle is moved, it captures at least one disc and therefore a positive amount of volume.}
\end{figure}

Our analysis of the nonlinear system \eqref{eq:amari} rests on a careful study of the linearized system, which we write as the abstract Cauchy problem
\begin{equation}\label{eq:amari_lin}
	\partial_t a - Ba =  \1_E u, \quad a(0) = a_0\,,
\end{equation}
where $B = -\alpha \id + \mu K$ with $\id$ denoting the identity operator and $K:L^p(\RR^d) \to L^p(\RR^d)$ given as the convolution operator $Ka = k * a$. For small $u$, the linearized system behaves similarly to the nonlinear one, see, e.g. \cite{bolelli2025neural}.


The unique mild solution $a$ of \eqref{eq:amari_lin} is given by the Duhamel formula
\begin{equation}\label{eq:amari_lin_mild}
	a(t) = S_t a_0 + \int\limits_0^t S_{t-s}\1_E u(s)\,\drm s, \quad (t \in (0,\infty))
\end{equation}
where $S_t = \euler^{tB}$ is the strongly continuous semigroup generated by the bounded linear operator $B$. This semigroup encodes the dynamics of the uncontrolled, linearized system. We will demonstrate that \eqref{eq:amari_lin} is open-loop stabilizable, assuming that $E$ is thick and $\mu$ is not too large. In the case where $p = 2$, this allows one to deduce the existence of a feedback law $F$ that makes the system \eqref{eq:amari_lin} closed-loop stabilizable. We will then employ this feedback law to stabilize the nonlinear equation.

The main technical difficulty arises in the study of the linearized system. This can be explained as follows: Our strategy for proving the open-loop stabilizability of \eqref{eq:amari_lin} rests on the well-known duality between controllability or stabilizability on the one side and observability on the other side. More precisely, we verify an abstract criterion for open-loop stabilizability proven in \cite{egidi2024sufficient}. This criterion follows from the fact that the open-loop stabilizability of \eqref{eq:amari_lin} is equivalent to a weak observability property of the adjoint system
\begin{equation*}
	\partial_t \phi - B^* \phi = 0, \quad \psi = \1_E \phi, \quad \phi(0) = \phi_0\,.
\end{equation*}
Roughly speaking, the result of \cite{egidi2024sufficient} states that \eqref{eq:amari_lin} is open-loop stabilizable if a certain abstract uncertainty principle and dissipation estimate hold. This criterion can be seen as a generalization of the Lebeau-Robbiano strategy for proving null-controllability of the heat equation, see \cite{lebeau1995controle} and, for instance, the related works \cite{miller2010direct, gallaun2020sufficient, bombach2023observability}.

In our concrete situation, the relevant uncertainty principle is the Logvinenko-Sereda theorem, which states that if a function $u$ is band-limited, i.e. if the Fourier transform $\hat u$ is concentrated on a cube, then $u$ cannot be concentrated on the complement of a thick set in a quantitative sense.

The corresponding dissipation estimate requires that high frequencies in the Fourier decomposition of $S_tu$ must decay sufficiently quickly as $t \to \infty$. If the semigroup $S_t$ were smoothing -- as is the case, for example, when considering the heat semigroup -- then this would be a straightforward application of the argument given in \cite{bombach2023observability}, see also \cite{egidi2024sufficient} for an application to the semigroup generated by the fractional Laplacian. The main complication in the proof of stabilizability for the Amari-type field equation comes from the fact that the semigroup $S_t$ is not smoothing. Thus, a careful analysis is required. In particular, we must assume a smallness condition on the coupling constant $\mu$ in relation to $\alpha$ and the Fourier transform of the interaction kernel $k$.

The paper is structured as follows: In Section \ref{sec:prelim} we fix the mathematical notation and state the necessary preliminaries from harmonic analysis. The stabilizability of the linearized system is proven in Section \ref{sec:linear}. The application to the nonlinear system is given in Section \ref{sec:nonlinear}. In section \ref{sec:numerics}, we illustrate the theoretical results with a simulation study. In the final section \ref{sec:discussion}, we situate the obtained results in a wider context and discuss directions for future research.
\section{Preliminaries}\label{sec:prelim}

\subsection{Review of mathematical notation}
As is customary, we denote by $\RR$ and $\CC$ the sets of real and respectively complex numbers. The set of positive integers is denoted by $\NN$. The $d$-dimensional volume (Lebesgue measure) of a set $E \subseteq \RR^d$ is denoted by $|E|$.\footnote{Throughout this work, we do not explicitly state the necessary assumptions of (Lebesgue-) measurability.} By $\1_E$ we denote a function that is $1$ on $E$ and $0$ elsewhere. The support of a function $f:\RR^d \to \CC$ -- the closure of the set of all points on which $f(x) \neq 0$ -- is denoted by $\supp f$. We set
\begin{align*}
	L^p(\RR^d) &= \{f: \|f\|_{L^p(\RR^d)} < \infty \} \\ \|f\|_{L^p(\RR^d)} &= \bigg( \int\limits_{\RR^d} |f|^p\,\drm x \bigg)^{\frac{1}{p}}\,,
\end{align*}
and for $r \in [1,\infty]$ and a (possibly infinite) interval $I \subseteq \RR$ define
\begin{align*}
	L^r(I;L^p(\RR^d)) &=  \{g: \|g\|_{L^r(I;L^p(\RR^d))} < \infty \} \\ \|g\|_{L^r(I;L^p(\RR^d))} &= \bigg( \int\limits_I \bigg( \int\limits_{\RR^d} |g(t;x)|^p\,\drm x \bigg)^{\frac{r}{p}}\,\drm t \bigg)^{\frac{1}{r}}
\end{align*}
with the outer norm replaced by an (essential) supremum if $r = \infty$. For a bounded linear operator $B$ between $L^p$-spaces, we denote the operator norm by $\|B\|$ (or $\|B\|_{L^p \to L^p}$ for emphasis).

By $\cS(\RR^d)$, we denote the Schwartz space of rapidly decreasing, infinitely differentiable functions. The Fourier transform is defined as the operator
\begin{equation*}
	\cS(\RR^d) \to \cS(\RR^d), \quad f \mapsto \hat f(\xi) = \int\limits_{\RR^d} \euler^{-\ii x \cdot \xi} f(x)\,\drm x\,.
\end{equation*}
By extending this operation to the space of tempered distributions $\cS^\prime(\RR^d)$, we may take Fourier transforms of arbitrary $L^p$-functions. We set $D = - \ii \nabla$ and, given $\kappa \in \cS^\prime(\RR^d)$ and $f \in \cS(\RR^d)$, we define $\kappa(D)f$ via $\widehat {\kappa(D)f} = \kappa \hat f$. This notation is motivated by the fact that for polynomial $\kappa$, $\kappa(D)$ acts by differentiation on $f$. Further notation will be introduced as required.

\subsection{Results from harmonic analysis}

Formally, the main geometric assumption on the control set $E$ may be stated as follows:
\begin{definition}\label{def:thick}
	 	Let $L,\rho > 0$. Denote by $\Pi_L$ the cube of sidelength $L$ centered at $0 \in \RR^d$. We say that $E \subseteq \RR^d$ is $(L,\rho)$-thick if
	\[
	\forall x \in \RR^d: \quad |E \cap (\Pi_L + x)| \geq \rho L^d\,.
	\]
	We say that $E$ is \emph{thick} if there exist $L,\rho > 0$ such that $E$ is $(L,\rho)$-thick.
\end{definition}
This notion of thickness is relevant for control theory because of the role it plays in the mathematical (and physical) uncertainty principle. The uncertainty principle (UCP) entails that a function cannot have compact support both in physical and in Fourier space. A sharp quantitative version of the UCP was proven in \cite{Kovrijkine-01}, improving upon the earlier works \cite{Panejah-61, Panejah-62, LogvinenkoS-74}:
\begin{theorem}\label{thm:LS}
	There exists a constant $C_{\mathrm{LS}} \geq 1$ such that for all $p \in [1,\infty]$, all $\lambda > 0$, all $f \in L^p (\RR^d)$ with $\supp \hat f \subseteq \Pi_\lambda$, all $\rho > 0$, all $L > 0$, and all $(\rho , L)$-thick sets $E \subseteq \RR^d$ we have 
	\begin{equation*} 
		\lVert \1_E f \rVert_{L^p (\RR^d)} \geq 
		\left( \frac{\rho}{C_{\mathrm{LS}} } \right)^{C_{\mathrm{LS}} (d + L\cdot \lambda) }  \lVert f \rVert_{L^p (\RR^d)} .
	\end{equation*}
\end{theorem}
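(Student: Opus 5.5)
This is Kovrijkine's quantitative Logvinenko--Sereda theorem, and I would follow his argument: a Bernstein-type bound on derivatives, a good/bad cube decomposition, and a Remez-type inequality for analytic functions. The constants are tracked so that they depend only on $d$, and all $\rho$- and $L\lambda$-dependence sits in the exponent. By scaling $x\mapsto Lx$, I reduce to $L=1$. This replaces the spectral cube $\Pi_\lambda$ by $\Pi_{L\lambda}$ and leaves $\rho$ unchanged, so it suffices to treat $(1,\rho)$-thick $E$ and $\supp\hat f\subseteq\Pi_\lambda$. Since $\hat f$ has compact support, $f$ extends to an entire function of exponential type. Bernstein's inequality then gives $\|\partial^\beta f\|_p\le (C\lambda)^{|\beta|}\|f\|_p$ for every multi-index $\beta$.

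\textbf{Step 1 (good and bad cubes).} I tile $\RR^d$ by unit cubes $Q$. A cube is called \emph{good} if
\[
\|\partial^\beta f\|_{L^p(Q)}\le A^{|\beta|}(C\lambda)^{|\beta|}\|f\|_{L^p(Q)}\quad\text{for all }\beta,
\]
where $A$ is a large constant depending on $d$. Summing the Bernstein bounds over $\beta$ with weights $A^{-p|\beta|}$ shows that the bad cubes carry at most half of $\|f\|_p^p$. Hence $\|f\|_{L^p(\text{good})}^p\ge\tfrac12\|f\|_p^p$. This is also where the case $p=\infty$ is handled, with the obvious modifications.

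\textbf{Step 2 (local estimate on a good cube).} On a good cube $Q$, Sobolev embedding provides a point $x_0\in Q$ with
\[
|\partial^\beta f(x_0)|\le C^{d}\,(AC\lambda)^{|\beta|}\,\|f\|_{L^p(Q)}\quad\text{for all }\beta.
\]
Taylor expansion then shows that $f$ extends analytically to a complex polydisc $D$ of fixed radius around $Q$, with
\[
\sup_D|f|\le e^{C(d+\lambda)}\,|Q|^{-1/p}\,\|f\|_{L^p(Q)}.
\]
I restrict $f$ to lines through a point where $|f|\gtrsim\|f\|_{L^p(Q)}$. On each line I apply the Remez-type inequality for analytic functions: for a set $\omega$ of positive measure,
\[
\sup_I|\phi|\le\Big(\frac{C}{|\omega|}\Big)^{\frac{\log M}{\log 2}}\sup_\omega|\phi|,\qquad M=\frac{\sup_D|\phi|}{\sup_I|\phi|}.
\]
Integrating over a family of lines and using $|E\cap Q|\ge\rho$, I obtain
\[
\|f\|_{L^p(Q)}\le\Big(\frac{C}{\rho}\Big)^{C(d+\lambda)}\|f\|_{L^p(E\cap Q)}.
\]
The exponent is of this form because $\log M\lesssim d+\lambda$.

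\textbf{Step 3.} I sum the local estimate over the good cubes and combine it with Step 1 to get the claim, with a suitable constant $C_{\mathrm{LS}}$. The main obstacle is Step 2. One has to pass from the multidimensional set $E\cap Q$ to one-dimensional slices that still have large enough measure, which I would do with a Fubini-type selection of lines through the good point. At the same time one must keep the growth bound $M$ of order $e^{C(d+\lambda)}$, so that the exponent stays linear in $d+L\lambda$. This is precisely Kovrijkine's refinement, and I would follow \cite{Kovrijkine-01} closely there.
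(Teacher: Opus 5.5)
The paper gives no proof of Theorem~\ref{thm:LS} and simply cites Kovrijkine \cite{Kovrijkine-01}. Your outline is precisely Kovrijkine's argument (rescaling to $L=1$, Bernstein's inequality, the good/bad cube decomposition, a Taylor bound on a complex neighbourhood, and the analytic Remez lemma on lines through a point of large $|f|$), so the two approaches coincide. When writing it out, note two things: Sobolev embedding gives the pointwise derivative bound only up to a factor $(1+AC\lambda)^d$ rather than $C^d$, which is harmless because it is absorbed into $e^{C(d+\lambda)}$ with a $d$-dependent constant (the statement allows this); and the $L^p(E\cap Q)$ bound comes from applying Remez to the sublevel set $\{x\in E\cap Q: |f(x)|<\epsilon\|f\|_{L^p(Q)}\}$ to show that this set has measure at most $\rho/2$.
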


It is important to note that this theorem also holds if $\RR^d$ is replaced by a $d$-dimensional torus, as shown in \cite{egidi2018sharp}. This effectively corresponds to the case where $f$ is a periodic, band-limited function.
Finally, we will require a classical multiplier theorem, see for example \cite[Theorem 6.2.7]{grafakos2014classical}. To state it, we need to introduce some further notation: A multi-index $\nu$ is a $d$-dimensional vector with entries in $\NN \cup \{0\}$. Its length $|\nu|$ is the sum of its entries. For $x \in \RR^d$ we write $x^\nu = x_1^{\nu_1} x_2^{\nu_2}\ldots x_d^{\nu_d}$ and $\partial^\nu = \partial_{x_1}^{\nu_1}\ldots \partial_{x_d}^{\nu_d}$\,.
\begin{theorem}\label{thm:HM}
	Let $\kappa:\RR^d \to \RR$ be continuously differentiable up to order $[d/2] + 1$. Assume that there exists a constant $A$, such that for all multi-indices $\nu$ with $|\nu| \leq d/2 + 1$ and all $\xi \in \RR^d$, we have
	\[
		|\xi|^{|\nu|}|\partial^\nu \kappa(\xi)| \leq A\,.
	\]
	For all $p \in (1,\infty)$ there exists a $C_p > 0$ such that
	\[
		\|\kappa(D)\|_{L^p(\RR^d) \to L^p(\RR^d)} \leq C_pA\,.
	\]
\end{theorem}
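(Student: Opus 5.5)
This is the classical Mikhlin–Hörmander multiplier theorem. I would prove it by Calder\'on--Zygmund theory. The plan has three parts: an $L^2$ bound, a H\"ormander-type integral condition on the convolution kernel of $\kappa(D)$, and interpolation combined with duality.

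\emph{$L^2$ bound.} The case $\nu = 0$ of the hypothesis gives $\|\kappa\|_{L^\infty} \leq A$. By Plancherel, $\|\kappa(D)\|_{L^2 \to L^2} \leq (2\pi)^0 A = A$, since the normalisation of the Fourier transform only contributes constants that cancel.

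\emph{Kernel estimates.} Fix $\psi \in C_c^\infty$ supported in the annulus $\{1/2 \leq |\xi| \leq 2\}$ with $\sum_{j \in \mathbb{Z}} \psi(2^{-j}\xi) = 1$ for $\xi \neq 0$. Set $\kappa_j(\xi) = \kappa(\xi)\psi(2^{-j}\xi)$ and let $K_j$ be the inverse Fourier transform of $\kappa_j$. Put $s = [d/2]+1 > d/2$.
\begin{enumerate}
\item By the Leibniz rule and the hypothesis, $|\partial^\nu \kappa_j| \lesssim A\,2^{-j|\nu|}$ for $|\nu| \leq s$, on a set of measure $\sim 2^{jd}$.
\item Plancherel applied to $x^\nu K_j$ then gives
\[
\int |x|^{2s}|K_j(x)|^2\,\drm x \lesssim A^2 2^{j(d-2s)}, \qquad \int |K_j(x)|^2\,\drm x \lesssim A^2 2^{jd}.
\]
\item Splitting at $|x| = R$ and using Cauchy--Schwarz yields
\[
\int_{|x|>R}|K_j| \lesssim A\,(2^jR)^{d/2-s}, \qquad \int |K_j| \lesssim A.
\]
\item The same argument applied to $\xi\kappa_j$ gives $\int |\nabla K_j| \lesssim A\,2^j$.
\end{enumerate}

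From these bounds I would derive the H\"ormander condition $\sup_{y}\int_{|x|>2|y|}|K(x-y)-K(x)|\,\drm x \leq C A$ for $K = \sum_j K_j$, splitting the sum into two ranges.
\begin{enumerate}
\item For $2^j|y| \leq 1$, use the mean value theorem and the gradient bound; this contributes $\lesssim A\,2^j|y|$.
\item For $2^j|y| > 1$, use the tail bound twice; this contributes $\lesssim A\,(2^j|y|)^{d/2-s}$.
\end{enumerate}
Both geometric series converge precisely because $s > d/2$. This is the step I expect to be the main obstacle. The hypothesis supplies only the minimal number of derivatives, so the weighted $L^2$ estimates have to be arranged so that the exponent $d/2 - s$ is strictly negative. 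One also has to justify that $K$, which is a priori only a tempered distribution, coincides with the function $\sum_j K_j$ away from the origin.

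\emph{Conclusion.} The Calder\'on--Zygmund decomposition, using the $L^2$ bound and the H\"ormander condition, gives a weak-type $(1,1)$ bound with constant $\lesssim A$. Marcinkiewicz interpolation with the $L^2$ bound then gives $\|\kappa(D)\|_{L^p \to L^p} \leq C_pA$ for $1<p\leq 2$. For $2 < p < \infty$ I would argue by duality. The adjoint of $\kappa(D)$ is $\overline{\kappa(-\cdot)}(D)$, and this multiplier satisfies the same hypothesis with the same $A$. Hence the bound at the dual exponent $p' \in (1,2)$ transfers to $p$.
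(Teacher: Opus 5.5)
Your proposal is correct. The paper gives no proof of its own; it cites the Mikhlin--H\"ormander theorem from \cite[Theorem 6.2.7]{grafakos2014classical}, and the proof there follows essentially your Calder\'on--Zygmund route: a dyadic decomposition of the multiplier, weighted $L^2$ kernel bounds via Plancherel, the H\"ormander integral condition, weak type $(1,1)$, Marcinkiewicz interpolation and duality. The point you leave open, identifying the distributional kernel with $\sum_j K_j$ away from the origin, can be handled routinely: run the argument for the truncated multipliers $\sum_{|j|\le N}\kappa_j$, whose constants are uniform in $N$, and pass to the limit.
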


\section{Stabilizability of the linear equation}\label{sec:linear}
We begin by defining the precise notions of stability that we will consider. Recall that a strongly continuous semigroup $(V_t)_{t \geq 0}$ on  a Banach space $X$ (such as $L^p(\RR^d)$) is a family of bounded linear operators such that $V_0 = \id$, for all $t,s \geq 0$ we have $V_{t+s} = V_t V_s$ and for every $u \in X$, the orbit $t \mapsto V_tu$ is continuous. Each bounded linear operator $B$ generates a strongly continuous semigroup via the exponential map $\euler^{tB}$. In general, strongly continous semigroups may have an unbounded generator.
\begin{definition}
	A strongly continuous semigroup $(V_t)_{t \geq 0}$ is called \emph{exponentially stable} if there exist $M > 0$, $\omega < 0$ such that
	\[
	\forall t > 0: \quad \|V_t\| \leq M\euler^{t\omega}\,.
	\]
\end{definition}
\begin{definition} We say that the system \eqref{eq:amari_lin} is (cost-uniformly) open-loop stabilizable if there exist $m,M \geq 1$ and $\omega < 0$ such that for all $a_0 \in L^p(\RR^d)$, there exists $u \in L^p((0,\infty);L^p(\RR^d))$ with
		\[
		\|u\|_{L^p((0,\infty);L^p(\RR^d))} \leq m\|a_0\|_{L^p(\RR^d)}
		\] such that the solution $a$ of \eqref{eq:amari_lin} satisfies
		\begin{equation*}
			\| a(t) \|_{L^p(\RR^d)} \leq M\euler^{\omega t} \|a_0\|_{L^p(\RR^d)}\,.
		\end{equation*}
		We call $u$ a stabilizing control for $a_0$ and $a$ the associated trajectory. The quantity $\|u\|_{L^p((0,\infty);L^p(\RR^d))}$ is called the control cost.
\end{definition}
\begin{definition}
		We say that the system \eqref{eq:amari_lin} is closed-loop stabilizable if there exists a bounded linear operator $F:L^p(\RR^d) \to L^p(\RR^d)$ -- the feedback -- such that $B + \1_E F$ generates an exponentially stable semigroup.
\end{definition}

The notions of stabilizability for the nonlinear system \eqref{eq:amari} are somewhat more delicate and we postpone a discussion to Section \ref{sec:nonlinear}.

 We begin by analyzing the growth bound of the uncontrolled semigroup $(S_t)_{t \geq 0}$, generated by the operator $B = -\alpha I + \mu K$. Setting $\kappa = \hat k$, we may write $K = \kappa(D)$ and it follows that $S_t = \sigma_t(D)$ where $\sigma_t = \euler^{(-\alpha + \mu\kappa)t}$. In the following, we write
\[
\kappa^* = \sup_{\xi \in \RR^d} \re \kappa(\xi)\,.
\]
The Fourier representation allows us to derive the following growth bound, the proof of which is a standard application of Theorem \ref{thm:HM}.
\begin{lemma} For all $\delta > 0$ there exists $M_\delta \geq 1$ such that
	\[
	\euler^{(\mu\kappa^* - \alpha)t} \leq \|S_t\| \leq M_\delta \euler^{(\mu\kappa^* - \alpha + \delta)t}\,.
	\] 
\end{lemma}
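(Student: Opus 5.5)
The plan is to work entirely on the Fourier side, using $S_t = \sigma_t(D)$ with $\sigma_t = \euler^{(-\alpha+\mu\kappa)t}$. Since $k \in \cS(\RR^d)$, also $\kappa = \hat k \in \cS(\RR^d)$, so $\kappa$ is smooth, bounded, and decays together with all its derivatives. The two inequalities will be handled separately.

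\emph{Lower bound.} We use the general fact that the $L^p$ operator norm of a Fourier multiplier with continuous symbol dominates the sup norm of the symbol. Concretely, fix $\xi_0$ and test $S_t$ on modulated, widely spread bumps $f_R(x) = \euler^{\ii x\cdot\xi_0} g(x/R)$ with $g \in \cS$ and $\hat g$ compactly supported. As $R \to \infty$, $\hat f_R$ concentrates at $\xi_0$, and continuity of $\sigma_t$ gives
\[
\|S_t f_R - \sigma_t(\xi_0) f_R\|_{L^p} = o(\|f_R\|_{L^p}).
\]
Hence $\|S_t\| \geq |\sigma_t(\xi_0)| = \euler^{(\mu \re\kappa(\xi_0)-\alpha)t}$. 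Taking the supremum over $\xi_0$ yields $\euler^{(\mu\kappa^*-\alpha)t}$. Alternatively, one can cite the standard fact $\|m\|_\infty \le \|m(D)\|_{L^p\to L^p}$.

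\emph{Upper bound.} We factor $\sigma_t = \euler^{(\mu\kappa^*-\alpha)t}\, \tau_t$ with $\tau_t = \euler^{\mu t(\kappa - \kappa^*)}$, so that $|\tau_t| \le 1$. We then apply Theorem~\ref{thm:HM} to $\tau_t$, which requires bounding $|\xi|^{|\nu|}|\partial^\nu \tau_t(\xi)|$ for $|\nu| \le d/2+1$. By the Faà di Bruno formula, $\partial^\nu \tau_t$ is $\tau_t$ times a sum of products of at most $|\nu|$ factors of the form $\mu t\,\partial^\beta\kappa$. Since $\kappa$ is Schwartz, $|\xi|^{|\beta|}|\partial^\beta\kappa|$ is bounded, so
\[
|\xi|^{|\nu|}|\partial^\nu\tau_t(\xi)| \le C(1+t)^{|\nu|} \le C(1+t)^{d/2+1}.
\]
Theorem~\ref{thm:HM} then gives
\[
\|S_t\| \le C_p C (1+t)^{d/2+1}\euler^{(\mu\kappa^*-\alpha)t},
\]
and absorbing the polynomial via $(1+t)^N \le M_\delta \euler^{\delta t}$ finishes the argument.

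The main obstacle is the derivative bound, namely controlling the factor $|\tau_t| = \euler^{\mu t(\re\kappa-\kappa^*)}$ against the polynomial-in-$t$ growth from the chain rule. Because $|\tau_t|\le 1$ this is harmless, which is why we only lose $\euler^{\delta t}$. A further point to check is that Theorem~\ref{thm:HM} as stated requires real-valued symbols. If $\kappa$ is complex (non-even $k$), we apply the theorem to the real and imaginary parts of $\tau_t$ separately, which satisfy the same bounds. If $k$ is real and even, $\kappa$ is real and the issue disappears.
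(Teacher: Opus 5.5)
Your proposal is correct and follows the approach the paper intends. The paper only says the bound is a standard application of Theorem~\ref{thm:HM}. Your factorization $\sigma_t = \euler^{(\mu\kappa^*-\alpha)t}\tau_t$ with $|\tau_t|\le 1$, the derivative bound $|\xi|^{|\nu|}|\partial^\nu\tau_t(\xi)|\le C(1+t)^{|\nu|}$, and absorbing the polynomial factor into $\euler^{\delta t}$ is the same mechanism the paper uses in the dissipation estimate of Theorem~\ref{thm:lin}. For the lower bound, which the paper leaves implicit, your appeal to $\|m\|_\infty \le \|m(D)\|_{L^p\to L^p}$ is the standard argument, and the modulated-bump justification you give for it is sound.
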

Having established the preliminaries, we are now able to state and prove the main result of this work:
\begin{theorem}\label{thm:lin}
	Let $E$ be a thick set and suppose that $2\alpha - \mu \kappa^* > 0$. Then the system \eqref{eq:amari_lin} is open-loop stabilizable.
\end{theorem}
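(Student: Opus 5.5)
We verify the abstract sufficient criterion for open-loop stabilizability of \cite{egidi2024sufficient}. In the form we use it, the criterion asks for a family of projections $(P_\lambda)_{\lambda>0}$ together with an uncertainty principle and a dissipation estimate. The uncertainty principle reads $\|P_\lambda \phi\| \le d_0 \euler^{d_1\lambda}\|\1_E P_\lambda\phi\|$. The dissipation estimate reads $\|(\id-P_\lambda)S_t^*\phi\| \le d_2\euler^{-d_3 t}\|\phi\|$, with the decay rate $d_3$ beating the growth of the semigroup on the low-frequency part by a suitable margin. The criterion also has to be applied on the dual space $L^{p'}$, with the adjoint semigroup $S_t^* = \bar\sigma_t(D)$, whose symbol $\euler^{(-\alpha+\mu\bar\kappa(\xi))t}$ has the same modulus as $\sigma_t$.

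As projections we take smooth Fourier cut-offs rather than sharp ones, since sharp cut-offs are not bounded on $L^{p'}$ for $d\ge 2$ in a form to which Theorem~\ref{thm:HM} applies. Fix $\chi\in C_c^\infty$ with $\chi=1$ on $\Pi_{1}$ and $\supp\chi\subseteq\Pi_2$, and set $P_\lambda=\chi(D/\lambda)$. The operator norm $\|P_\lambda\|$ is independent of $\lambda$ by scaling and is finite by Theorem~\ref{thm:HM}. Since $\widehat{P_\lambda\phi}$ is supported in $\Pi_{2\lambda}$, Theorem~\ref{thm:LS} gives the uncertainty principle with $d_1 = 2C_{\mathrm{LS}}L\log(C_{\mathrm{LS}}/\rho)$. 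If the criterion really needs genuine projections, we can instead use an $L^p$-version of the criterion that accepts families of uniformly bounded operators, or we can reduce to the case $p=2$, where the sharp cut-off is an orthogonal projection.

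The dissipation estimate is the main obstacle, because $S_t$ is not smoothing. High frequencies therefore do not decay faster than the bulk; they decay only like $\euler^{(-\alpha+\mu\re\kappa(\xi))t}$. Since $k\in\cS$, the Riemann--Lebesgue lemma gives $\kappa(\xi)\to 0$ as $|\xi|\to\infty$. Hence for every $\varepsilon>0$ there is $\lambda_0$ such that $\mu|\kappa(\xi)|\le\varepsilon$ whenever $\xi\notin\Pi_{\lambda_0}$.

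To turn this into an operator bound, we apply Theorem~\ref{thm:HM} to the symbol
\[
(1-\chi(\xi/\lambda))\,\euler^{(-\alpha+\mu\bar\kappa(\xi))t}.
\]
The derivatives of this symbol produce polynomial factors in $t$, because $\partial^\nu \euler^{\mu\bar\kappa t}$ involves powers of $t$ times derivatives of $\kappa$, and these are bounded using $|\xi|^{|\nu|}|\partial^\nu\kappa|\lesssim 1$. Absorbing the polynomial factors into a loss $\delta$ in the exponent, we obtain
\[
\|(\id-P_\lambda)S_t^*\| \le C\euler^{(-\alpha+\varepsilon+\delta)t} \quad\text{for } \lambda\ge\lambda_0 .
\]
The dissipation rate is thus essentially $\alpha$, uniformly in large $\lambda$. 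This is not the super-exponential rate $\euler^{-c\lambda^\gamma t}$ that a Lebeau--Robbiano argument uses. We therefore expect to need the version of the criterion in \cite{egidi2024sufficient} that only requires a fixed decay rate $d_3$ exceeding the growth rate of the semigroup on the low-frequency part.

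That growth rate is governed by the preceding Lemma, which gives $\|S_t\|\le M_\delta\euler^{(\mu\kappa^*-\alpha+\delta)t}$. Stability then needs the high-frequency decay $\alpha-\varepsilon-\delta$ to exceed the low-frequency growth $\mu\kappa^*-\alpha+\delta$, that is, $2\alpha-\mu\kappa^*>\varepsilon+2\delta$. This is exactly where the hypothesis $2\alpha-\mu\kappa^*>0$ enters: we choose $\varepsilon,\delta$ small, then $\lambda=\lambda_0$ fixed. With $\lambda$ fixed, the exponential constant $\euler^{d_1\lambda}$ from the uncertainty principle is harmless, since the criterion then yields a weak observability inequality
\[
\|S_T^*\phi\|\le \beta\|\phi\| + C_{\mathrm{obs}}\|\1_E S_\cdot^*\phi\|_{L^{p'}((0,T);L^{p'})}
\]
with $\beta<1$. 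By duality this is equivalent to cost-uniform open-loop stabilizability with the bound $\|u\|\le m\|a_0\|$. The step I would check most carefully is whether the criterion in \cite{egidi2024sufficient} tolerates a $\lambda$-independent dissipation rate combined with a growing semigroup, and whether the constants come out with exactly the threshold $2\alpha-\mu\kappa^*$ or with a slightly stronger condition.
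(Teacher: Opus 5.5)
Your proposal is correct and follows essentially the paper's route. Both use one smooth Fourier cutoff $P$ at a fixed large frequency $\lambda$, the H\"ormander--Mikhlin theorem to get $\|S_t(\id-P)\|\le M\euler^{-(\alpha-\varepsilon)t}$ from $\kappa(\xi)\to 0$, and Logvinenko--Sereda for the low frequencies; the paper phrases this last step as the range condition $\ran(P)\subseteq\ran(P\1_E)$, obtained by duality from the inequality for the adjoint cutoff on $L^q$, which is equivalent to your dual formulation. The criterion you were unsure about is exactly Corollary 3.4 of \cite{egidi2024sufficient}, as the paper cites it: it needs only a single bounded operator $P$ (so neither a projection nor a family in $\lambda$, and your $p=2$ fallback is unnecessary) and a decay rate $\omega_P>\mu\kappa^*-\alpha$, so choosing $\varepsilon\in(0,2\alpha-\mu\kappa^*)$ gives precisely the stated threshold with no extra loss.
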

\begin{proof}
	If $\mu \kappa^* - \alpha < 0$, then the system is trivially stabilizable by the preceding lemma. Therefore, we assume $\mu\kappa^* - \alpha \geq 0$.
	
	Corollary 3.4 in \cite{egidi2024sufficient} states that if there exists a $P \in \cL(L^p(\RR^d))$ such that
	\begin{equation}\label{eq:range_condition}
		\ran(P) \subseteq \ran(P\1_E)\,
	\end{equation}
	and there exist $M \geq 1$ and $\omega_P > \mu \kappa^* - \alpha$ such that
	\begin{equation}\label{eq:dissipation}
		\forall t > 0: \| S_t(\id -P)\| \leq M\euler^{-\omega_P t}\,,
	\end{equation}
	then the system \eqref{eq:amari_lin} is open-loop stabilizable.
	Thus the proof will conclude by choosing a $P$ verifying the range condition \eqref{eq:range_condition} and the dissipation estimate \eqref{eq:dissipation}. We will start with the dissipation estimate.
	
	\paragraph{Dissipation estimate} Denote by $B_r$ the ball of radius $r > 0$. We choose a radially symmetric $\phi \in \cS(\RR^d)$ such that $0 \leq \phi \leq 1$, $\phi = 1$ on the unit ball $B_1$ in $\RR^d$ and $\phi = 0$ on $\RR^d \setminus B_2$. Let $\lambda \geq 1$. We set $\phi_\lambda(x) = \phi(\lambda^{-1} x)$ and $P = \phi_\lambda(D)$. The precise value of $\lambda$ will be chosen later.
	
	The operator $M_t = S_t(\id - P)$ is a Fourier multiplier with symbol \[ m_t(\xi) = \euler^{(-\alpha + \mu \kappa(\xi))t}(1 - \phi(\lambda^{-1} \xi))\,. \] It suffices to show that there exist $M \geq 1, \omega_P > \mu \kappa^* - \alpha$ such that
	\begin{equation}\label{eq:multiplier}
		\| m_t(D) \| \leq M\euler^{-\omega_P t}\,.
	\end{equation}
	For this we'll combine the derivative bounds from \cite{bombach2023observability} with Theorem \ref{thm:HM}: Let $\beta,\gamma \in \NN_0^d$ be multi-indices. Setting
	\[
	C_{\beta} = \max_{\gamma \leq \beta} \sup_{\xi \in \RR^d}|\partial^\gamma \kappa(\xi)|(1 + |\xi|)^{|\beta|}
	\] it follows from the chain rule and the product rule that there exists a constant $K_\beta$ such that
	\begin{align*}
	&(1 + |\xi|)^{|\beta|}|\partial^\beta m_t(\xi)| \\ &\leq K_\beta C_\beta(1 + t|\mu|)^{|\beta|} \euler^{(-\alpha + \mu \re \kappa(\xi))t}\1_{\RR^d \setminus B_\lambda(0)(\xi)}\,.
	\end{align*}
	Since $\kappa(\xi) \to 0$ as $|\xi| \to \infty$, for every $\varepsilon > 0$ we may choose $\lambda$ such that
	\[
	(1 + |\xi|)^{|\beta|}|\partial^\beta m_t(\xi)| \leq K_\beta C_\beta(1 + t|\mu|)^{|\beta|}\euler^{-(\alpha - 2\varepsilon) t}\,.
	\]
	Setting
	\[
	A_{\beta} = K_\beta C_\beta \sup\limits_{t > 0}\, (1 + t|\mu|)^{|\beta|}\euler^{-\varepsilon t}\,,
	\]
	it follows that
	\[
	(1 + |\xi|)^{|\beta|}|\partial^\beta m_t(\xi)| \leq A_\beta\euler^{-(\alpha - \varepsilon)t}\,.
	\]
	By the multiplier theorem \ref{thm:HM}, there exists $M_\varepsilon \geq 1$ such that
	\[
	\|m_t(D)\| \leq M_\varepsilon\euler^{-(\alpha - \varepsilon)t} 
	\]
	By assumption, we may choose $\varepsilon \in (0,2\alpha - \mu\kappa^*)$. Thus
	\[
	(\alpha -\varepsilon) > \alpha - 2\alpha + \mu\kappa^* = \mu\kappa^* - \alpha 
	\]
	which means that \eqref{eq:multiplier} holds with $\omega_P = \alpha -\varepsilon, M = M_\varepsilon$.
	
	\paragraph{Range condition} Choose $q$ such that $\frac{1}{p} + \frac{1}{q} = 1$. Define $Q:L^q(\RR^d) \to L^q(\RR^d)$ via $Q = \phi_\lambda(D)$ with the same $\lambda \geq 1$ as before. Since $E$ is thick, it follows from the Logvinenko-Sereda theorem that there exists a constant $C > 0$ such that for all $q \in (1,\infty)$ we have
	\[
	\forall u \in L^q(\RR^d): \|Qu\|_{L^q(\RR^d)} \leq C\euler^{C\lambda} \|\1_EQu\|_{L^q(\RR^d)}
	\]
	Since $\phi_\lambda$ is radially symmetric, it follows that $Q^* = P$. The range condition is now an easy consequence of a standard duality argument, see \cite[Section 4]{egidi2024sufficient}.
\end{proof}
In order to show closed-loop stabilizability, we need to specialize to the Hilbert space setting $p=2$. In this case, it is an easy consequence of the Riccati theory that the open-loop stabilizability of \eqref{eq:amari_lin} implies the closed-loop stabilizability of \eqref{eq:amari_lin}. To this end, let $a_0 \in L^2(\RR^d)$, and $u$ a stabilizing control of \eqref{eq:amari_lin} with associated trajectory $a$. It follows that the quadratic functional
\[
J(v,b) = \|v\|^2_{L^2((0,\infty);L^2(\RR^d))} + \|b\|^2_{L^2((0,\infty);L^2(\RR^d))}
\]
is finite when evaluated at $v = u, b = a$. This implies the existence of the desired feedback operator $F$ by \cite[Theorem 17.3]{zabczyk2020mathematical}. Thus, we obtain:
\begin{proposition}\label{prop:closed_loop_linear}
	Let $E$ be thick, $p = 2$ and $2\alpha - \mu \kappa^* > 0$. Then the system $\eqref{eq:amari_lin}$ is closed-loop stabilizable.
\end{proposition}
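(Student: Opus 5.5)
The plan is to reduce closed-loop stabilizability to open-loop stabilizability, which Theorem \ref{thm:lin} already provides, and then build the feedback through the optimal control (linear-quadratic) problem in the Hilbert space $X = L^2(\RR^d)$. Here the control operator is $\1_E \in \cL(X)$, which is bounded and self-adjoint, and the generator $B = -\alpha\id + \mu K$ is bounded. The hypotheses of Theorem \ref{thm:lin} with $p=2$ hold, so for every $a_0 \in X$ there is a control $u$ with $\|u\|_{L^2((0,\infty);X)} \le m\|a_0\|$ and a trajectory $a$ satisfying $\|a(t)\| \le M\euler^{\omega t}\|a_0\|$ for some $\omega<0$.

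The first step is to show that the infinite-horizon cost is finite and bounded by a quadratic form in $a_0$. Integrating the exponential bound gives
\[
J(u,a) \le m^2\|a_0\|^2 + \frac{M^2}{2|\omega|}\|a_0\|^2 .
\]
Hence the value function $V(a_0) = \inf J$ over admissible pairs satisfies $V(a_0) \le c\|a_0\|^2$, with $c$ independent of $a_0$. This is precisely the finite cost condition in \cite[Theorem 17.3]{zabczyk2020mathematical}. That theorem, or its standard Hilbert-space version, yields a bounded, nonnegative, self-adjoint solution $\Pi$ of the algebraic Riccati equation
\[
B^*\Pi + \Pi B - \Pi \1_E \Pi + \id = 0 .
\]
Here we use $\1_E^* = \1_E$ and $\1_E^2 = \1_E$. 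The theorem also gives that the optimal control is generated by the feedback $u = -\1_E\Pi a$. I then set $F = -\Pi$, and the closed-loop generator is $B + \1_E F = B - \1_E\Pi$.

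The remaining step is exponential stability of the semigroup $T_t$ generated by $B - \1_E\Pi$. Let $a(t) = T_t a_0$ be the closed-loop (optimal) trajectory. The identity $V(a_0) = \langle \Pi a_0, a_0\rangle$ shows that
\[
\int_0^\infty \|T_t a_0\|^2\,\drm t \le \langle \Pi a_0,a_0\rangle \le c\|a_0\|^2
\]
for every $a_0$, since the state part of the optimal cost is dominated by the whole cost. Datko's theorem then implies that $(T_t)$ is exponentially stable. I expect the main point needing care to be this last step: checking that the cited theorem gives the feedback form with the weight $\id$ on the state (so the state cost is coercive), and that Datko's criterion applies. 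Both are standard because all operators involved are bounded, so no domain issues arise.
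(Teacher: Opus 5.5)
Your proposal is correct and follows the paper's route. You use Theorem \ref{thm:lin} with $p=2$ to show that the quadratic cost $J$ is finite, then invoke the Riccati theory of \cite[Theorem 17.3]{zabczyk2020mathematical} to obtain the feedback. Your explicit Riccati equation with $\1_E^*=\1_E=\1_E^2$ and the Datko argument simply spell out details that the paper leaves inside that citation.
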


\section{Stabilizability of the nonlinear equation}\label{sec:nonlinear}
In this section, we investigate whether the linear feedback operator $F$ that we derived in the previous section can be used to stabilize the nonlinear system \eqref{eq:amari} to an equilibrium state. We focus here on the situation where the equilibrium state is $0$, but this is mainly for notational convenience. Throughout this section, we assume that the assumptions of Proposition \ref{prop:closed_loop_linear} are fulfilled: $E$ is thick, $2\alpha - \mu \kappa^* > 0$ and $p = 2$. Moreover, we assume the nonlinearity $f$ is globally Lipschitz continuous and that $f(0) = 0, f'(0) = 1$. It is convenient to rewrite the system \eqref{eq:amari} as follows: By the assumptions on $f$, we may write $f(\theta) = \theta + g(\theta)$ with $g(0) = g'(0) = 0$. It is clear that $g$ is Lipschitz continuous as a map $\RR \to \RR$. We define the nonlinear operator
\[
\cN:L^2(\RR^d) \to L^2(\RR^d), \quad \phi \mapsto \mu Kg(\phi)\,.
\]
It is not hard to verify that $\cN$ is globally Lipschitz continuous as a map $L^2(\RR^d) \to L^2(\RR^d)$: Indeed, denoting by $\Lambda$ the (pointwise) Lipschitz constant of $g$, it follows from Young's inequality and the pointwise bound
\[
|g(\theta) -g(\Theta)| \leq \Lambda|\theta - \Theta|, \quad \theta, \Theta \in \RR
\]
that for all $\phi,\psi \in L^2(\RR^d)$, we have
\begin{align}
	\|\cN(\phi) - \cN(\psi)\|_{L^2(\RR^d)} &\leq \mu\|k\|_{L^1(\RR^d)}\|g(\phi) - g(\psi)\|_{L^2(\RR^d)} \notag \\
	&\leq \mu \|k\|_{L^1(\RR^d)}L\|\phi - \psi\|_{L^2(\RR^d)}\,. \label{eq:lipschitz_estimate}
\end{align}
With this notation, the system \eqref{eq:amari} may be rewritten as
\begin{equation*}
	\partial_t a - Ba = \1_Eu + \cN(a), \quad a(0) = a_0 \in L^2(\RR^d)\,.
\end{equation*}
By Proposition \ref{prop:closed_loop_linear}, there exists a feedback operator $F$ such that the semigroup $(V_t)_{t \geq 0}$ generated by $B + \1_EF$ is exponentially stable. Setting $u = Fa_0$, we obtain
\begin{equation}\label{eq:amari_feedback}
	\partial_t a - (B + \1_EF)a = \cN(a), \quad a(0) = a_0 \in L^2(\RR^d)\,. 
\end{equation}
It is a standard result (see \cite[Chapter 6.1]{pazy2012semigroups}), that for every $a_0 \in L^2(\RR^d)$, the system \eqref{eq:amari_feedback} has a unique global mild solution $a \in L^\infty((0,\infty),L^2(\RR^d))$ satisfying
\begin{equation}\label{eq:amari_feedback_mild}
	\forall t \in (0,\infty):\, a(t) = V_ta_0 + \int\limits_0^t V_{t-s}\cN(a(s))\,\drm s\,.
\end{equation}
It can be shown that for a sufficiently weak nonlinearity, the trajectories of the system \eqref{eq:amari_feedback}, obtained from inserting the feedback of the linearized system \eqref{eq:amari_lin} into the nonlinear system \eqref{eq:amari}, are stable. 
\begin{corollary}\label{corr:nonlin}
	Suppose that $\gamma = M\mu \|k\|_{L^1(\RR^d)}\Lambda + \omega < 0$. Then for all $a_0 \in L^2(\RR^d)$ the mild solution of \eqref{eq:amari_feedback} satisfies
	\[
	\forall t \in (0,\infty): \quad \|a(t)\|_{L^2(\RR^d)} \leq M\euler^{\gamma t}\|a_0\|\,.
	\]
\end{corollary}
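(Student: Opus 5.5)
The plan is a standard Gronwall argument applied to the mild formulation \eqref{eq:amari_feedback_mild}. Here $M\geq 1$ and $\omega<0$ are the constants of exponential stability of the semigroup $(V_t)_{t\geq 0}$ generated by $B+\1_E F$, which Proposition \ref{prop:closed_loop_linear} provides: $\|V_t\|\leq M\euler^{\omega t}$. I also read the Lipschitz constant in \eqref{eq:lipschitz_estimate} as $\Lambda$, so that
\[
\|\cN(\phi)-\cN(\psi)\|_{L^2(\RR^d)}\leq \mu\|k\|_{L^1(\RR^d)}\Lambda\,\|\phi-\psi\|_{L^2(\RR^d)} .
\]
Since $g(0)=0$, we have $\cN(0)=0$, and therefore
\[
\|\cN(a(s))\|_{L^2(\RR^d)}\leq \mu\|k\|_{L^1(\RR^d)}\Lambda\,\|a(s)\|_{L^2(\RR^d)} .
\]

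First, I take norms in \eqref{eq:amari_feedback_mild} and use the semigroup bound together with the linear bound on $\cN$:
\[
\|a(t)\|\leq M\euler^{\omega t}\|a_0\| + \int_0^t M\euler^{\omega(t-s)}\mu\|k\|_{L^1}\Lambda\,\|a(s)\|\,\drm s .
\]
Next, I multiply through by $\euler^{-\omega t}$ and set $y(t)=\euler^{-\omega t}\|a(t)\|$. This gives
\[
y(t)\leq M\|a_0\| + M\mu\|k\|_{L^1}\Lambda\int_0^t y(s)\,\drm s .
\]
The function $y$ is locally bounded, because the mild solution lies in $L^\infty((0,\infty);L^2(\RR^d))$; in fact it is continuous. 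So the integral form of Gronwall's lemma applies and yields
\[
y(t)\leq M\|a_0\|\,\euler^{M\mu\|k\|_{L^1}\Lambda t}.
\]

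Finally, I undo the substitution:
\[
\|a(t)\|\leq M\euler^{(M\mu\|k\|_{L^1}\Lambda+\omega)t}\|a_0\| = M\euler^{\gamma t}\|a_0\| .
\]
This is exactly the claimed estimate, and the hypothesis $\gamma<0$ makes it a decay estimate.

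There is no genuine obstacle. The only points needing care are bookkeeping: checking that the constants $M$ and $\omega$ in the statement are those of $(V_t)_{t\geq 0}$, which requires $M\geq 1$ so the estimate is meaningful at $t=0$, and confirming the regularity of $t\mapsto\|a(t)\|$ needed for Gronwall. That regularity follows from the mild solution being continuous in $L^2(\RR^d)$, since it is the fixed point of the globally Lipschitz Duhamel map. If one prefers to avoid that regularity discussion, an alternative is to run a Picard iteration in the weighted space $\{a:\sup_t \euler^{-\gamma t}\|a(t)\|<\infty\}$. Its iterates satisfy the same integral inequality, and the bound passes to the limit.
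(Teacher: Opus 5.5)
Your proof is correct and follows essentially the same route as the paper. The paper omits the proof and cites Cazenave--Haraux, whose underlying argument is exactly your Gronwall estimate on $y(t)=\euler^{-\omega t}\|a(t)\|_{L^2(\RR^d)}$, using $\cN(0)=0$ and the Lipschitz constant $\mu\|k\|_{L^1(\RR^d)}\Lambda$; you are right to read the $L$ in \eqref{eq:lipschitz_estimate} as $\Lambda$, and to take $M\geq 1$, $\omega<0$ as the stability constants of the closed-loop semigroup $(V_t)_{t\geq 0}$.
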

The proof is a direct consequence of a general result in the stability theory of semilinear systems \cite[Theorem 10.2.2]{cazenave1998introduction} and therefore omitted.

\section{Numerical simulation}\label{sec:numerics}

\begin{figure}[!ht]
	\centering
	\includegraphics[width=0.99\linewidth]{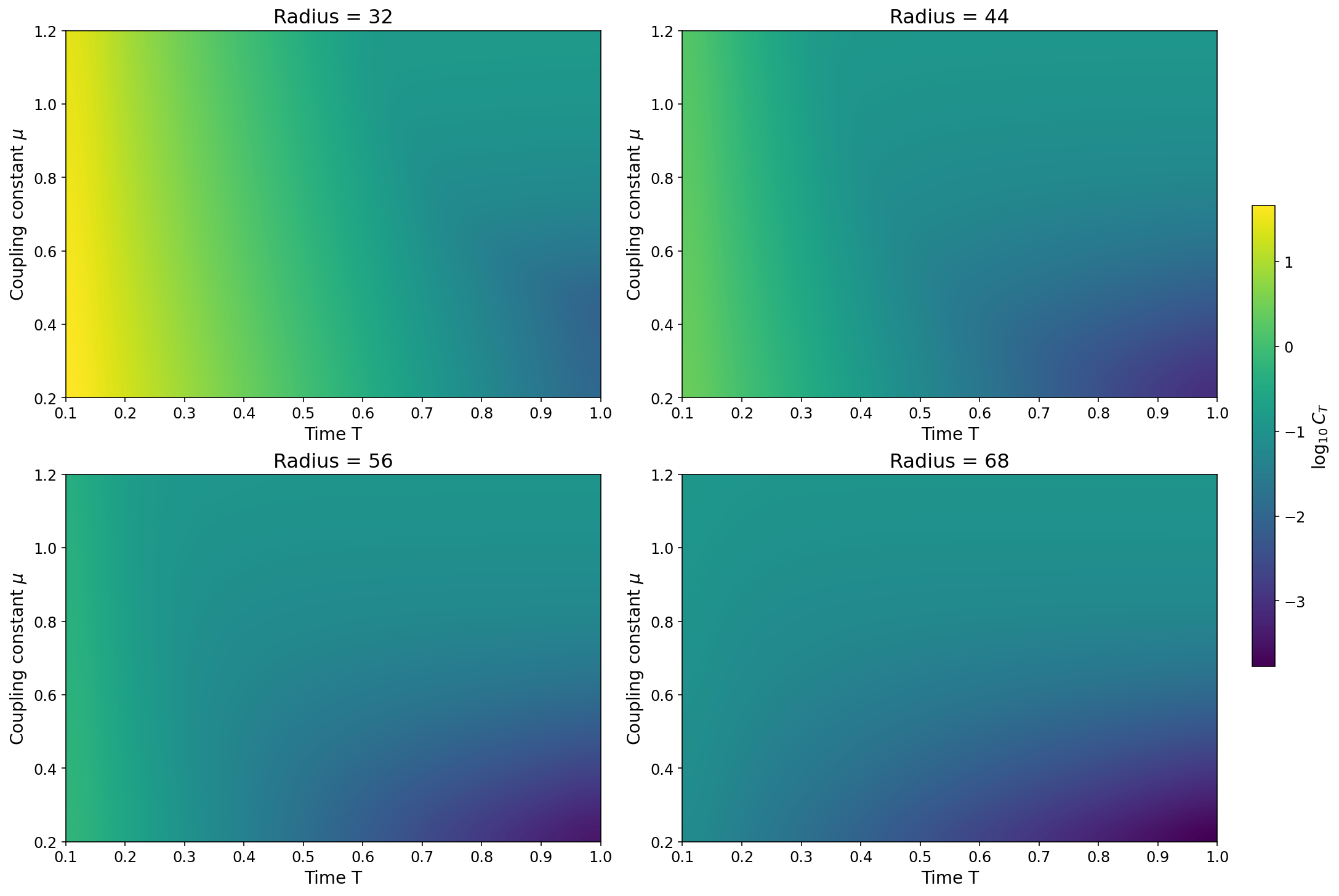}
	\caption{Heatmap showing the results of the parameter sweep. The x-axis represents the final time $T$, and the y-axis the coupling constant $\mu$. As the size of the control set increases and the coupling constant decreases, the observability constant becomes smaller, indicating lower control cost.}
	\label{fig:heatmapobservabilityconstant}
\end{figure}

In this section, we perform a small numerical simulation study to illustrate the results in the linearized setting.  To get an empirical upper bound on the control cost of \eqref{eq:amari_lin}, we calculate the \emph{observability constant} $C_\obs$:
By the duality between controllability (or stabilizability) on the one side and observability on the other, the open-loop stabilizability of \eqref{eq:amari_lin} is equivalent to an observability estimate of the following form: There exist $c \in [0,1), T > 0, C_\obs > 0$ such that for all $a \in L^2(\RR^d)$, we have
\begin{equation}\label{eq:obs}
	\|S_Ta\|^2_{L^2(\RR^d)} \leq  C_\obs\|\1_ES_{(\cdot)}a\|^2_{L^2((0,T);L^2(\RR^d))} + c\|a\|^2_{L^2(\RR^d)}\,.
\end{equation}
If we can choose $c = 0$ in the above inequality, then the system \eqref{eq:amari_lin} is null-controllable in time $T$.

To estimate $C_\obs$ it is useful to introduce the \emph{observability Gramian}, defined as
\[
	G_T = \int\limits_0^T S_t^*\1_E S_t\,\drm t\,.
\]
Here, $S_t^*$ is the adjoint semigroup to $S_t$. Moreover, we set $V_T = S_T^*S_T$. It follows that for $c = 0$, the observability inequality is equivalent to an inequality of the form $V_T \leq C_{\obs} G_T$ in the sense of quadratic forms.

To get a numerical estimate for $C_\obs$, we will work in dimension $d = 1$ on a large interval $[0,N]$ (where $N = 1024$) with periodic boundary conditions. This allows us to represent $V_T,G_T$ as (infinite) matrices in the Fourier basis $\{\euler^{2\pi \ii k x/N}\}_{k = -\infty}^\infty$. We work with finite truncations $X_T = P_M V_T P_M, Y_T = P_M G_T P_M$ where $P_M$ is the projection onto the finite dimensional subspace generated by $\{\euler^{2\pi \ii k x/N}\}_{k = -M}^{M}$. In our experiments, we choose $M = 64$. In this setting, the calculation of $C_\obs$ reduces to finding the largest $\lambda$ solving the generalized eigenvalue problem $X_Tv = \lambda Y_Tv$ with an eigenvector $v \in \RR^{2M+1} \setminus \{0\}$. One then has $C_\obs \approx \sqrt{\lambda}$. For the numerical implementation, we employ the SciPy eigensolver \cite{2020SciPy-NMeth}.

The control set $E$ is given as the union of $n=10$ randomly chosen sub-intervals of varying length $2r$. We fix a constant decay rate $\alpha = 9$ and as the kernel function $k$, we pick a Mexican hat kernel $k(x) = (1 - (x/64)^2)\euler^{-(x/64)^2/2}$. The Fourier transform is $\kappa(\xi) = C\xi^2\euler^{-(64\xi)^2/2}$ where $C$ is some normalization constant. The coupling parameter $\mu$ is left to vary together with the final time $T$ and the interval radius $r$. Thereby, we aim to estimate the effect of the interaction term and the size of the control set on the final-state observability of the system \eqref{eq:amari_lin}.

The results of the parameter sweep are depicted in Fig. \ref{fig:heatmapobservabilityconstant}. Since the observability constant depends on the constant from Theorem \ref{thm:LS} and the growth bound of the semigroup, the theory predicts that the observability constant should decrease as the size of the control set increases and the coupling constant $\mu$ decreases. Thus, the figure shows that the results from the numerical experiment are in line with the predictions and that the observability, and therefore the controllability and stabilizability of the system are highly sensitive to the geometry of the control set.

\section{Discussion}\label{sec:discussion}
We have shown that neural fields are stabilizable from thick subsets of $\RR^d$. More precisely, we have shown that there are three cases that have to be distinguished, depending on the relation between the activity decay $\alpha$ and the terms $\mu, \kappa^*$, coming from the neural interaction. If $ \mu\kappa^* - \alpha < 0$, then the linear system is already stable and there is no need for stabilization. If $ \mu\kappa^* - \alpha > 0$ but $\mu\kappa^* - 2\alpha < 0$, then stabilization is possible by an open-loop control (Theorem \ref{thm:lin}). If $\mu\kappa^* - 2\alpha > 0$, it is not known whether the system is stabilizable to an equilibrium state. This result stands in stark contrast to the one of \cite{tamekue2025control}. There, the authors prove approximate controllability to arbitrary target states with an error of size $O(T^2)$ as $T \to 0$, under a mild spectral condition on the Fr\'echet derivative of the nonlinearity, but with the assumption that $E = \RR^d$. Under restrictive smallness conditions on $\mu$ and the final time $T$, they even obtain exact controllability. This suggests that restricting the control set either puts a strong theoretical limit on controllability of neural fields, or requires more advanced methods to show even approximate controllability.

Let us note that we have made various simplifying assumptions throughout this work: For instance, we have assumed that there is only one population of neurons. More biologically plausible neural field models often involve multiple populations of excitatory or inhibitory neurons. Furthermore, we have assumed that the activation occurs without delay. Again, it would increase the biological realism to incorporate a delay, as done for instance in \cite{brivadis2024adaptive}.

Regarding possible ways to handle multiple populations, we note that a Lebeau-Robbiano strategy for coupled heat-like systems has already been implemented in \cite{bombach2025logvinenko}. It seems plausible that such an approach would extend to linearized Amari systems involving multiple populations by the method outlined here. Incorporating delay would be more challenging within our framework and require an understanding of how to implement the Lebeau-Robbiano strategy in this setting or the pursuit of an alternate approach, for instance via Carleman estimates.

On the practical side, our numerical experiment indicates that the size of the control set plays a substantial role in the determination of the control cost. On the theoretical side, our work demonstrates the utility of the Lebeau-Robbiano strategy even in situations where the underlying equation is quite different from the heat equation for which this strategy was originally intended. Indeed, the heat equation is local, linear and smoothing while we are here dealing with a nonlocal, nonlinear and nonsmoothing equation. It is known from the work \cite{miller2006controllability}, that for fractional heat equations of the form $\partial_t a - (-\Delta)^s a = \1_E u$, controllability is only guaranteed if $s > 1/2$ and fails if $s \leq 1/2$ . However, stabilizability continues to hold as long as $s > 0$ (see \cite{egidi2024sufficient}). Morally speaking, the Amari-type field equation roughly corresponds to the regime where $(-\Delta)^s$ is replaced by $\phi(-\Delta)$ with $\phi$ bounded. Thus, it appears that we are at the limit of what is possible with a straightforward application of the Lebeau-Robbiano strategy. 

We conclude that it is a challenging open problem to determine the sharp geometric condition on which approximate controllability of neural fields holds, in particular in the strong-coupling regime $\mu\kappa^* - 2\alpha > 0$. As an intermediate step, it might be of interest to consider sets that satisfy the stronger Geometric Control Condition (GCC), introduced in \cite{bardos1987controle} to study the null-controllability of the wave equation. This choice is natural since the wave equation, like the Amari equation, preserves smoothness but is not (globally) smoothing.

\bibliographystyle{IEEEtran}
\bibliography{lit}

@ARTICLE{2020SciPy-NMeth,
	author  = {Virtanen, Pauli and Gommers, Ralf and Oliphant, Travis E. and
	Haberland, Matt and Reddy, Tyler and Cournapeau, David and
	Burovski, Evgeni and Peterson, Pearu and Weckesser, Warren and
	Bright, Jonathan and {van der Walt}, St{\'e}fan J. and
	Brett, Matthew and Wilson, Joshua and Millman, K. Jarrod and
	Mayorov, Nikolay and Nelson, Andrew R. J. and Jones, Eric and
	Kern, Robert and Larson, Eric and Carey, C J and
	Polat, {\.I}lhan and Feng, Yu and Moore, Eric W. and
	{VanderPlas}, Jake and Laxalde, Denis and Perktold, Josef and
	Cimrman, Robert and Henriksen, Ian and Quintero, E. A. and
	Harris, Charles R. and Archibald, Anne M. and
	Ribeiro, Ant{\^o}nio H. and Pedregosa, Fabian and
	{van Mulbregt}, Paul and {SciPy 1.0 Contributors}},
	title   = {{{SciPy} 1.0: Fundamental Algorithms for Scientific
	Computing in Python}},
	journal = {Nature Methods},
	year    = {2020},
	volume  = {17},
	pages   = {261--272},
	adsurl  = {https://rdcu.be/b08Wh},
	doi     = {10.1038/s41592-019-0686-2},
}

@article{amari1977dynamics,
	title={Dynamics of pattern formation in lateral-inhibition type neural fields},
	author={Amari, Shun'ichi},
	journal={Biological cybernetics},
	volume={27},
	number={2},
	pages={77--87},
	year={1977},
	publisher={Springer}
}

@article{bardos1987controle,
	title={Contr{\^o}le et stabilisation pour l'{\'e}quation des ondes},
	author={Bardos, Claude and Lebeau, Gilles and Rauch, Jeff},
	journal={Journ{\'e}es {\'e}quations aux d{\'e}riv{\'e}es partielles},
	pages={1--15},
	year={1987}
}

@article{bombach2023observability,
	title={Observability and null-controllability for parabolic equations in {$L_p$}-spaces},
	author={Bombach, Clemens and Gallaun, Dennis and Seifert, Christian and Tautenhahn, Martin},
	journal={Mathematical Control and Related Fields},
	volume={13},
	number={4},
	pages={1484--1499},
	year={2023},
	publisher={Mathematical Control and Related Fields}
}

@article{bombach2025logvinenko,
	author = {Bombach, Clemens and Tautenhahn, Martin},
	title = {A {Logvinenko--Sereda} theorem for vector-valued functions and application to control theory},
	journal = {Z. Anal. Anwend.},
	volume = {44},
	number = {3/4},
	pages = {323--354},
	year = {2025}
}

@article{bolelli2025neural,
	title={Neural field equations with time-periodic external inputs and some applications to visual processing},
	author={Bolelli, M. Virginia and Prandi, Dario},
	journal={Journal of Mathematical Imaging and Vision},
	volume={67},
	number={4},
	pages={47},
	year={2025},
	publisher={Springer}
}

@article{brivadis2024adaptive,
	title={Adaptive observer and control of spatiotemporal delayed neural fields},
	author={Brivadis, Lucas and Chaillet, Antoine and Auriol, Jean},
	journal={Systems \& Control Letters},
	volume={186},
	pages={105777},
	year={2024},
	publisher={Elsevier}
}

@book{cazenave1998introduction,
	title={An introduction to semilinear evolution equations},
	author={Cazenave, Thierry and Haraux, Alain},
	volume={13},
	year={1998},
	publisher={Oxford University Press}
}

@article{chaillet2017robust,
	title={Robust stabilization of delayed neural fields with partial measurement and actuation},
	author={Chaillet, Antoine and Detorakis, Georgios Is and Palfi, St{\'e}phane and Senova, Suhan},
	journal={Automatica},
	volume={83},
	pages={262--274},
	year={2017},
	publisher={Elsevier}
}

@book{coombes2014neural,
	title={Neural fields: theory and applications},
	author={Coombes, Stephen and beim Graben, Peter and Potthast, Roland and Wright, James},
	year={2014},
	publisher={Springer},
	doi = {10.1007/978-3-642-54593-1}
}

@article{cook2022neural,
	title={Neural Field Models: A mathematical overview and unifying framework},
	author={Cook, Blake J and Peterson, Andre DH and Woldman, Wessel and Terry, John R},
	journal={Mathematical Neuroscience and Applications},
	volume={2},
	year={2022},
	publisher={Episciences. org},
	doi={mna.episciences.org/9228}
}

@article{egidi2018sharp,
	title={Sharp geometric condition for null-controllability of the heat equation on R d and consistent estimates on the control cost},
	author={Egidi, Michela and Veseli{\'c}, Ivan},
	journal={Archiv der Mathematik},
	volume={111},
	number={1},
	pages={85--99},
	year={2018},
	publisher={Springer}
}

@article{egidi2024sufficient,
	title={Sufficient criteria for stabilization properties in Banach spaces},
	author={Egidi, Michela and Gallaun, Dennis and Seifert, Christian and Tautenhahn, Martin},
	journal={Integral Equations and Operator Theory},
	volume={96},
	number={2},
	pages={13},
	year={2024},
	publisher={Springer}
}

@article{gallaun2020sufficient,
	title={Sufficient criteria and sharp geometric conditions for observability in Banach spaces},
	author={Gallaun, Dennis and Seifert, Christian and Tautenhahn, Martin},
	journal={SIAM journal on control and optimization},
	volume={58},
	number={4},
	pages={2639--2657},
	year={2020},
	publisher={SIAM}
}

@book{grafakos2014classical,
	title={Classical {F}ourier analysis},
	author={Grafakos, Loukas},
	volume={3},
	year={2014},
	publisher={Springer}
}

@article{gu2015controllability,
	title={Controllability of structural brain networks},
	author={Gu, Shi and Pasqualetti, Fabio and Cieslak, Matthew and Telesford, Qawi K and Yu, Alfred B and Kahn, Ari E and Medaglia, John D and Vettel, Jean M and Miller, Michael B and Grafton, Scott T and others},
	journal={Nature communications},
	volume={6},
	number={1},
	pages={8414},
	year={2015},
	publisher={Nature Publishing Group UK London}
}

@article{karrer2020practical,
	title={A practical guide to methodological considerations in the controllability of structural brain networks},
	author={Karrer, Teresa M and Kim, Jason Z and Stiso, Jennifer and Kahn, Ari E and Pasqualetti, Fabio and Habel, Ute and Bassett, Danielle S},
	journal={Journal of neural engineering},
	volume={17},
	number={2},
	pages={026031},
	year={2020},
	publisher={IOP Publishing}
}

@Article{Kovrijkine-01,
	author  = {O. Kovrijkine},
	title   = {Some results related to the {L}ogvinenko-{S}ereda theorem},
	number  = {10},
	pages   = {3037--3047},
	volume  = {129},
	journal = {Proc. Amer. Math. Soc.},
	year    = {2001},
}

@Article{LogvinenkoS-74,
	author  = {{V.~N.} Logvinenko and {Ju.~F.} Sereda},
	title   = {Equivalent norms in spaces of entire functions of exponential type},
	pages   = {102--111},
	volume  = {20},
	journal = {Teor. Funkts., Funkts. Anal. Prilozh.},
	year    = {1974},
}

@article{lebeau1995controle,
	title={Contr{\^o}le exact de l'{\'e}quation de la chaleur},
	author={Lebeau, Gilles and Robbiano, Luc},
	journal={Communications in Partial Differential Equations},
	volume={20},
	number={1-2},
	pages={335--356},
	year={1995},
	publisher={Taylor \& Francis}
}

@article{miller2006controllability,
	title={On the controllability of anomalous diffusions generated by the fractional Laplacian},
	author={Miller, Luc},
	journal={Mathematics of Control, Signals and Systems},
	volume={18},
	number={3},
	pages={260--271},
	year={2006},
	publisher={Springer}
}

@article{miller2010direct,
	title={A direct {L}ebeau-{R}obbiano strategy for the observability of heat-like semigroups},
	author={Miller, Luc},
	journal={Discrete and Continuous Dynamical Systems-Series B},
	volume={14},
	number={4},
	pages={1465--1485},
	year={2010}
}

@article{Panejah-61,
	title={Some theorems of {P}aley-{W}iener type},
	author={{B. P.} Panejah},
	journal={Soviet Math. Dokl.},
	volume={2},
	number={},
	pages={533--536},
	year={1961},
}

@article{Panejah-62,
	title={On some problems in harmonic analysis},
	author={{B. P.} Panejah},
	journal={Dokl. Akad. Nauk SSSR},
	volume={142},
	pages={1026--1029},
	year={1962},
}

@book{pazy2012semigroups,
	title={Semigroups of linear operators and applications to partial differential equations},
	author={Pazy, Amnon},
	year={1983},
	publisher={Springer Science \& Business Media}
}

@article{tamekue2025control,
	title={Control of neural field equations with step-function inputs},
	author={Tamekue, Cyprien and Ching, ShiNung},
	journal={arXiv preprint arXiv:2510.22022},
	year={2025}
}

@article{tamekue2024mathematical,
	title={A mathematical model of the visual {M}ac{K}ay effect},
	author={Tamekue, Cyprien and Prandi, Dario and Chitour, Yacine},
	journal={SIAM Journal on Applied Dynamical Systems},
	volume={23},
	number={3},
	pages={2138--2178},
	year={2024},
	publisher={SIAM},
	doi={10.1137/23M1616686}
}

@article{taylor2014computational,
	title={A computational study of stimulus driven epileptic seizure abatement},
	author={Taylor, Peter Neal and Wang, Yujiang and Goodfellow, Marc and Dauwels, Justin and Moeller, Friederike and Stephani, Ulrich and Baier, Gerold},
	journal={PLOS one},
	volume={9},
	number={12},
	pages={e114316},
	year={2014},
	publisher={Public Library of Science San Francisco, USA}
}

@book{zabczyk2020mathematical,
	title={Mathematical control theory},
	author={Zabczyk, Jerzy},
	year={2020},
	publisher={Springer}
}

\end{document}